\tikzset{
    >=stealth,
    every picture/.style={thick},
    graphs/every graph/.style={empty nodes},
}
\tikzstyle{vertex}=[
\tikzstyle{printersafe}=[decoration={snake,amplitude=0pt}]
\newcommand{\yy}{\mathcal{Y}}
\newcommand{\cc}{\mathcal{C}}
\newcommand{\pp}{\mathbb{P}}
\newcommand{\qq}{\mathbb{Q}}
\newcommand{\ee}{\mathcal{E}}
\newcommand{\TT}{\mathbb{T}}
\newcommand{\oo}{\mathcal{O}}
\def\O#1.{\mathcal {O}_{#1}}			
\def\pr #1.{\mathbb P^{#1}}				
\def\af #1.{\mathbb A^{#1}}			
\def\ses#1.#2.#3.{0\to #1\to #2\to #3 \to 0}	
\def\xrar#1.{\xrightarrow{#1}}			
\def\K#1.{K_{#1}}						
\def\bA#1.{\mathbf{A}_{#1}}			
\def\bM#1.{\mathbf{M}_{#1}}				
\def\bL#1.{\mathbf{L}_{#1}}				
\def\bB#1.{\mathbf{B}_{#1}}				
\def\bK#1.{\mathbf{K}_{#1}}			
\def\subs#1.{_{#1}}					
\def\sups#1.{^{#1}}
\newtheorem{theorem}{Theorem}[section]
\newtheorem{lemma}[theorem]{Lemma}
\newtheorem{proposition}[theorem]{Proposition}
\newtheorem{corollary}[theorem]{Corollary}
\newtheorem{conjecture}[theorem]{Conjecture}
\newtheorem{notation}[theorem]{Notation}
\newtheorem{definition}[theorem]{Definition}
\newtheorem{example}[theorem]{Example}
\newtheorem{question}[theorem]{Question}
\newtheorem{construction}[theorem]{Construction}
\newtheorem{remark}[theorem]{Remark}
\theoremstyle{remark}
\numberwithin{equation}{section}
\begin{document}

\title[A hyperbolicity conjecture for adjoint bundles]{A hyperbolicity conjecture for adjoint bundles}

\author[J.~Moraga]{Joaqu\'in Moraga}
\address{UCLA Mathematics Department, Box 951555, Los Angeles, CA 90095-1555, USA
}
\email{jmoraga@math.ucla.edu}

\author[W. Yeong]{Wern Yeong}
\address{UCLA Mathematics Department, Box 951555, Los Angeles, CA 90095-1555, USA
}
\email{wyyeong@math.ucla.edu}

\subjclass[2020]{Primary 14J70, 14C20, 32Q45;
Secondary  14M25, 14F17.}
\keywords{hyperbolicity, hypersurfaces, toric geometry, Fujita's conjecture.}

\begin{abstract}
Let $X$ be a $n$-dimensional smooth projective variety and $L$ be an ample Cartier divisor on $X$. We conjecture that a very general element of the linear system $|K_X+(3n+1)L|$
is a hyperbolic algebraic variety. 
This conjecture holds for some classical varieties: surfaces, products of projective spaces, and Grassmannians. In this article, we investigate the conjecture for $X$ a toric variety.
We confirm the conjecture in the case of smooth projective toric varieties. 
When $X$ is a Gorenstein toric variety, we show that 
$|K_X+(3n+1)L|$ is pseudo hyperbolic.
For a Gorenstein toric threefold $X$, we show that 
$|K_X+9L|$ is hyperbolic.
\end{abstract}

\maketitle

\setcounter{tocdepth}{1} 
\tableofcontents

\section{Introduction}

Given a complex space $X$, Kobayashi
constructed an invariant pseudo-distance 
by means of chains of holomorphic discs (see, e.g.,~\cite{Kob70}).
This invariant pseudo-distance is a natural extension of the classic Poincar\'e distance
on the complex unit disc.
A complex space $X$ is said to be {\em Kobayashi hyperbolic} if Kobayashi's pseudo-distance is indeed a distance function.
In~\cite{Dem97}, Demailly introduced the notion of {\em algebraic hyperbolicity} for a smooth projective variety $X$ as an algebraic analog of Kobayashi hyperbolicity (see Definition~\ref{def:hyperb}). 
The algebraic hyperbolicity is a uniform lower bound of the genus of curves $C$ in $X$ that is linear in terms of the degree once a polarization on $X$ is fixed.
It is expected that an algebraically hyperbolic 
smooth projective variety is Kobayashi hyperbolic (see, e.g.~\cite{JK20}).
In any case, hyperbolic varieties should contain no smooth rational curves or elliptic curves.
It is a general philosophy that generic hypersurfaces of large degree on smooth projective varieties are %algebraically 
hyperbolic. 
For instance, for every ample Cartier divisor $A$ on a smooth projective variety $X$, there exists a constant $d_0:=d_0(X,A)$ such that a general hypersurface $H\in |A^{d}|$ is Kobayashi hyperbolic provided that $d\geq d_0$~\cite{Bro17}.
Moreover, for $A$ a very ample Cartier divisor on $X$, the previous statement works for $d_0\leq (n+1)^{(2n+6)}$ where $n$ is the dimension of $X$~\cite{Den16}.
In the case of algebraic hyperbolicity,
Clemens~\cite{Cle86} and Ein~\cite{Ein88} proved that a very general hypersurface in $\pp^n$ of degree at least $2n$ is algebraically hyperbolic.
This bound was improved by Voisin~\cite{Voi96,Voi98} to $2n-1$ for $n\geq 4$ and by Pacienza~\cite{Pac04}, Clemens and Ran~\cite{CR04}, and the second author~\cite{Yeo24} to $2n-2$ for $n\geq 5$.
In this article, we pose the following conjecture about the algebraic hyperbolicity of adjoint linear systems on smooth projective varieties.

\begin{conjecture}
\label{conj:main}
{\em
Let $L$ be an ample Cartier divisor on a smooth projective variety $X$ of dimension $n\geq 2$. Then the linear system $|K_X+(3n+1)L|$ is hyperbolic.\footnote{This means that a {\em very general} element is algebraically hyperbolic.}
}
\end{conjecture}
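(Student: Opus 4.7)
The strategy I would try is to adapt the classical Clemens--Ein--Voisin approach from hypersurfaces in $\pp^n$ to adjoint hypersurfaces in a general smooth $X$. The coefficient $3n+1$ is calibrated precisely to recover the classical threshold: when $X=\pp^n$ we have $K_{\pp^n}=-(n+1)H$, so $|K_{\pp^n}+(3n+1)H|=|2nH|$, which is Clemens' and Ein's original algebraic hyperbolicity range. This suggests the role that $H$ and the Euler sequence play for $\pp^n$ should be replaced, for a general $X$, by the positivity of $L$ together with Fujita-type positivity of suitable twists of $\Omega_X^1$.

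The natural steps are the following. First, invoke an effective basepoint-freeness result (Angehrn--Siu, say) to guarantee that $K_X+(n+2)L$ is very ample; combined with Bertini this makes the very general member $H\in |K_X+(3n+1)L|$ smooth and irreducible, and by adjunction $K_H = (2K_X + (3n+1)L)|_H$, so the hypersurface $H$ comes with ``slack'' of roughly $(2n-1)L$ of extra positivity beyond being canonical. Second, work on the universal family $\pi:\mathcal{H}\to B$ over the open locus of smooth members, and for a curve $f:\widetilde{C}\to H$ in a very general fiber, bound the genus via the standard tangent--cotangent argument: dualize the composition $T_{\widetilde{C}}\to f^*T_H\hookrightarrow f^*T_X|_H$ to get a map $f^*\Omega_X^1|_H\to \omega_{\widetilde{C}}$, and tensor with a sufficiently positive line bundle so that $\Omega_X^1$ twisted appropriately is generically generated along $H$. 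Third, the deformation of $H$ in the linear system should contribute additional global generation, analogous to Voisin's ``twisted tangent bundle'' argument where one uses the variation of $H$ in $|K_X+(3n+1)L|$ to kill curves that would otherwise be too negative.

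The quantitative heart of the argument is a cohomological statement: one must show that a bundle of the shape
\[
\Omega_X^1 \otimes \oo_X\bigl((K_X + mL)\bigr)\big|_H
\]
(for an $m$ determined by the universal-family deformation) is globally generated, or at least generically generated after allowing suitable twists by powers of $L$, with the excess precisely eaten up by the slack $(2n-1)L$ coming from adjunction. Combining this with Demailly's inequality
\[
2g(\widetilde{C})-2 \ \geq\ \varepsilon\, L\cdot f_*\widetilde{C}
\]
for some uniform $\varepsilon>0$ then gives algebraic hyperbolicity.

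The main obstacle, in my view, is the absence on a general $X$ of a ``universal'' tool playing the role of the Euler sequence of $\pp^n$: the positivity of $\Omega_X^1\otimes L^{\otimes k}$ one needs scales with $X$, and one has no uniform Castelnuovo--Mumford-type control in terms only of $n=\dim X$. This is almost certainly why the authors restrict to toric $X$, where the combinatorics of the defining polytope of $K_X+(3n+1)L$ produces very explicit monomial sections, giving a workable substitute for the Euler sequence and a concrete way to control how curves deform as $H$ moves in its linear system.
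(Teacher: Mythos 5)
The statement you were given is labeled a \emph{Conjecture}, and the paper neither proves it nor claims to: it establishes it only for smooth projective toric varieties (Theorem~\ref{thm:Fujita-hyperbolicity-toric}) and for Gorenstein toric $3$-folds (Theorem~\ref{thm:Fujita-hyperbolicity-toric-3-folds}), and cites known results for $\pp^n$, products of projective spaces, Grassmannians, and complete intersections. Your write-up is, correctly, a strategy sketch rather than a proof, and your closing diagnosis of the obstruction is in the right neighborhood; but two points need correction.

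First, there is a concrete error in step one: Angehrn--Siu does \emph{not} give that $K_X+(n+2)L$ is basepoint-free, let alone very ample. Angehrn--Siu produces global generation of $K_X+mL$ only for $m$ on the order of $\tfrac{1}{2}n(n+1)$, a quadratic bound. The linear bound $K_X+(n+1)L$ nef/globally generated is exactly Fujita's conjecture, open in general for $n\geq 6$. This is not a cosmetic issue: the calibration you want, writing $K_X+(3n+1)L$ as a nef piece plus $2nL$, requires Fujita-type positivity of $K_X+(n+1)L$. The paper's reduction to toric varieties is precisely to gain access to Fujita (Payne, Fujino) and to Musta\c{t}\u{a}'s toric vanishing theorem; absent those, the decomposition you are building on is not available, and your sketch would quietly assume Fujita.

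Second, your guess about what the toric restriction buys is not quite what the paper does. Ikeda and Haase--Ilten do use the combinatorics of polytopes (the integer decomposition property) to manufacture sections, but these authors explicitly \emph{avoid} that route (see the discussion after Theorem~\ref{thm:sec-3-gen-hyp}). Instead they follow Coskun--Riedl: they set $N:=K_X+(n+1)L$ (nef by toric Fujita) and $\ee:=\oo_X(N+2nL)$, introduce the Lazarsfeld kernel (syzygy) bundle $M_\ee$, identify $T_{\yy/X}\simeq\pi_2^*M_\ee$ in the universal family, and prove generic global generation of the twisted normal bundle via surjectivity of Castelnuovo--Mumford multiplication maps plus Musta\c{t}\u{a} vanishing (Lemmas~\ref{lem:mult-map-surj},~\ref{lem:vanishing-cohomology-me}, Propositions~\ref{prop:sec-dom},~\ref{prop:universal-tangent-sheaf-glob-gen}). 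This gives pseudo hyperbolicity modulo the invariant divisors $D_i$ where $L+D_i$ fails to be nef; full hyperbolicity is then obtained by induction on dimension, using Musta\c{t}\u{a} vanishing to show that the restriction $H^0(X,\oo_X(N+2nL))\to H^0(D_i,\oo_{D_i}(N|_{D_i}+2nL|_{D_i}))$ is surjective, so a very general section restricts to a very general section on each $D_i$. This induction-via-boundary step is the ingredient missing from your sketch and is what converts ``pseudo hyperbolic modulo a subvariety'' into genuine hyperbolicity.
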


Conjecture~\ref{conj:main} is motivated by the aforementioned results on algebraic hyperbolicity of hypersurfaces, Fujita's conjecture~\cite{Fuj87}, and the syzygies of adjoint linear systems~\cite{BL23}.
The bound $3n+1$ in the above conjecture is sharp
by considering curves in $\pp^2$.
However, by considering very general hypersurfaces in $\pp^n$, we expect the sharp bound to be lower in higher dimensions; namely $3n$ when $n=3$ and $3n-1$ when $n\geq5$.
Conjecture~\ref{conj:main} is known in the cases when $X$ is $\pp^n$, a product of projective spaces,
the Grassmannian $G(k,n)$, or a product of Grassmannians (see Example~\ref{ex:prod-project} and Example~\ref{ex:prod-grass}).
A weaker version of Conjecture~\ref{conj:main}, namely the pseudo hyperbolicity of the linear system, is known for abelian varieties~\cite{Kaw80}.
The main aim of this article is to provide new evidence for Conjecture~\ref{conj:main}.
The hyperbolicity of hypersurfaces in toric varieties has been studied by many authors~\cite{Ikeda09,HaaseIlten21,CoskunRiedl23,Rob21}. 
For this reason, we focus on Conjecture~\ref{conj:main} in the case that $X$ is a toric variety.
It is worth mentioning that Conjecture~\ref{conj:main} is a natural extension of Fujita's conjecture on the base point freeness and very ampleness of adjoint linear systems.
Fujita's conjecture is known to hold for mildly singular toric varieties.
Our first result is a positive answer to Conjecture~\ref{conj:main} for smooth toric varieties.

\begin{theorem}\label{thm:Fujita-hyperbolicity-toric}
{\em
Let $X$ be a Gorenstein projective toric variety of dimension $n$, and $L$ an ample Cartier divisor on $X$.
Then $|K_X+(3n+1)L|$ is pseudo hyperbolic. If $X$ is smooth, then $|K_X+(3n+1)L|$ is hyperbolic.
}

\end{theorem} 

We say that a linear system is \emph{hyperbolic} if a very general element of the linear system is an algebraically hyperbolic variety.
We say that it is \emph{pseudo hyperbolic} if a very general element of the linear system is algebraically hyperbolic up to a proper subvariety
(see Definition~\ref{def:hyperb}).
The second result of this article is a hyperbolicity result for adjoint linear systems
on Gorenstein toric $3$-folds.

\begin{theorem}\label{thm:Fujita-hyperbolicity-toric-3-folds}
Let $X$ be a Gorenstein toric $3$-fold.
Let $L$ be an ample Cartier divisor on $X$.
Then the linear system $|K_X+9L|$ is hyperbolic, i.e., a very general element of this linear system is an algebraically hyperbolic surface.
\end{theorem}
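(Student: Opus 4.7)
My plan is to reduce the Gorenstein case to a smooth toric model via a \emph{crepant} toric resolution, and to exploit the Gorenstein hypothesis to save the single unit of $L$ separating the $3n+1=10$ bound of Theorem~\ref{thm:Fujita-hyperbolicity-toric} from the target $3n=9$.

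First, I would construct the crepant resolution. Every Gorenstein toric singularity is canonical: if $u\in M$ is the integral linear form with $\langle u,v_i\rangle = 1$ on the primitive ray generators $v_i$ of a cone $\sigma$, then every nonzero lattice point of $\sigma$ takes a positive integer value under $u$. In dimension three, the slice $\sigma\cap\{\langle u,\cdot\rangle = 1\}$ is a lattice polygon, and any lattice polygon admits a unimodular triangulation using only its own lattice points; refining the fan of $X$ accordingly produces a smooth projective toric variety $\pi\colon Y\to X$ with $K_Y = \pi^*K_X$. Crepancy gives $\pi^*(K_X+9L) = K_Y + 9\pi^*L$, so a very general $S\in|K_X+9L|$ pulls back to a very general divisor in $|K_Y+9\pi^*L|$. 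Since $|K_X+9L|$ is very ample on Gorenstein toric varieties by Fujita's theorem, the strict transform $\tilde S\subset Y$ is smooth and differs from $\pi^*S$ only by an effective exceptional divisor supported where $S$ meets the non-isolated singular locus of $X$.

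The core step is to transfer hyperbolicity from the ample setting of Theorem~\ref{thm:Fujita-hyperbolicity-toric} to the nef-and-big polarization $\pi^*L$ on $Y$. I would choose ample Cartier divisors $H_\delta = \pi^*L - \sum \delta_i E_i$ on $Y$ for small positive $\delta_i$ (standard on a resolution, after clearing denominators), so that Theorem~\ref{thm:Fujita-hyperbolicity-toric} yields hyperbolicity of $|K_Y + 10H_\delta|$ with Demailly constants $\varepsilon_\delta>0$. I would then argue that as $\delta\to 0$ these constants stabilize and descend to a positive genus lower bound on curves in a very general $\tilde S\in|K_Y+9\pi^*L|$ polarized by $\pi^*L|_{\tilde S}$; the crepant equality $K_Y=\pi^*K_X$ is what eliminates the discrepancy correction that would otherwise force the $+10$ bound. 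Finally, any irreducible curve $C\subset S$ has a strict transform $\tilde C\subset\tilde S$ with equal geometric genus and with $\pi^*L\cdot\tilde C = L\cdot C$, so algebraic hyperbolicity of $\tilde S$ with respect to $\pi^*L|_{\tilde S}$ transfers verbatim to algebraic hyperbolicity of $S$ with respect to $L|_S$.

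The main obstacle is the limiting/deformation step. Algebraic hyperbolicity does not automatically pass from an ample polarization to a nef-and-big one, and the linear systems $|K_Y+9\pi^*L|$ and $|K_Y+10H_\delta|$ are not comparable by inclusion. Closing this gap requires a careful analysis of how genera of curves on a very general hypersurface behave as the polarization moves to the boundary of the ample cone; it is precisely the vanishing of toric discrepancies on the crepant resolution --- available only because of the Gorenstein hypothesis --- that keeps the estimate uniform in this limit and buys back the missing factor of $L$.
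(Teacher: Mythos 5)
Your crepant-resolution strategy is a genuinely different route from the paper's argument, but the gap you flag at the end is real and, in my view, fatal to the approach as stated rather than a technical detail to be filled in.

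The core problem is that both Theorem~\ref{thm:Fujita-hyperbolicity-toric} and its more flexible form Theorem~\ref{thm:hyperbolicity} (hyperbolicity of $|N+2nL|$ for $N$ nef and $L$ ample) use ampleness of the polarization $L$ in an essential, quantitative way: the lower bound from the proof of Theorem~\ref{thm:sec-3-gen-hyp} reads $2g(C)-2 \geq L\cdot C$, and the Demailly constant $\varepsilon>0$ exists precisely because $L\cdot C>0$ for every curve $C$. On your crepant resolution $\pi\colon Y\to X$, the divisor $\pi^*L$ is only nef and big, and $\pi^*L\cdot C=0$ whenever $C$ is a $\pi$-exceptional curve. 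Thus the bound collapses exactly on those curves, and the toric machinery gives no positive constant. Your proposed fix --- perturbing to $H_\delta=\pi^*L-\sum\delta_i E_i$ and sending $\delta\to 0$ --- does not close this gap, because the hyperbolicity statement is about a \emph{very general member} of a \emph{fixed} linear system: the systems $|K_Y+9H_\delta|$ (or $|K_Y+10H_\delta|$) are distinct from $|K_Y+9\pi^*L|$ for every $\delta>0$, the constants $\varepsilon_\delta$ a priori degenerate as $\delta\to 0$, and there is no mechanism by which a genus bound for very general members of the perturbed systems descends to very general members of the limiting one. Crepancy of $\pi$ gives you the clean identity $K_Y+9\pi^*L=\pi^*(K_X+9L)$, which is a helpful bookkeeping fact, but it does not supply the missing positivity: it does not make $\pi^*L$ ample, and it does not bound the genus of curves in $\tilde S$ lying in the exceptional locus (the strict transform of a very general $S$ can perfectly well contain $\pi$-exceptional rational curves when $X$ has one-dimensional singular locus, and those curves do not appear at all in $S$, but they ruin any attempt to prove $\tilde S$ hyperbolic with respect to $H_\delta$).

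The paper avoids these issues entirely by never passing to a resolution. The proof of Theorem~\ref{thm:Fujita-hyperbolicity-toric-3-folds} has two independent pieces. First, Remark~\ref{rem:3-fold-case} (following Haase--Ilten) extends the pseudo-hyperbolicity statement of Theorem~\ref{thm:sec-3-gen-hyp} directly to Gorenstein threefolds, so a very general $S\in|K_X+9L|$ satisfies the genus inequality for all curves not contained in the toric boundary. Second, for each torus-invariant prime divisor $D$, the paper passes through a small $\qq$-factorialization and Musta\c{t}\u{a}'s vanishing to show that $H^0(X,\mathcal{O}_X(K_X+9L))\to H^0(D,\mathcal{O}_D((K_X+9L)|_D))$ is surjective, so $S\cap D$ is a very general member of $|(K_X+9L)|_D|$; then Lemma~\ref{lem:toric-surf-divisor}, applied to the base-point-free decomposition $(K_X+4L)|_D + 5L|_D$ afforded by Fujita's conjecture for Gorenstein toric varieties, shows this curve has genus at least two. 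Since $S\cap D$ is a \emph{curve} (not a surface), one does not need hyperbolicity of a restricted linear system on $D$ --- only a genus lower bound on a single curve --- and this is what makes the induction on dimension terminate cheaply. The $3n$ bound rather than $3n+1$ is recovered because for Gorenstein toric threefolds not isomorphic to $\pp^3$ the divisor $K_X+3L$ is already nef, so $K_X+9L$ has the form $N+2nL$ required by Theorem~\ref{thm:sec-3-gen-hyp}; this is the actual mechanism that ``buys back'' the missing unit of $L$, and it lives entirely on $X$, not on a resolution.

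Concretely: the missing step in your argument is a proof that algebraic hyperbolicity with respect to the ample polarizations $H_\delta$ passes uniformly to the nef-and-big limit $\pi^*L$. No such principle is established in the paper, and it seems unlikely to hold in the generality you need; I would abandon the resolution and instead work on $X$ itself via a small $\qq$-factorialization and the surface Lemma~\ref{lem:toric-surf-divisor}, as in the paper.
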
 

The statement of the previous theorem is sharp.
Indeed, a very general quintic in $\pp^3$ is algebraically hyperbolic, while a very general quartic in $\pp^3$ is not (see, e.g.,~\cite{CR19}). 
We note that our proof applies to all Gorenstein toric threefolds except for $\pp^3$, which is addressed in~\cite{CR19}.
Theorem~\ref{thm:Fujita-hyperbolicity-toric-3-folds} produces many cases of $3$-dimensional singular varieties in which Conjecture~\ref{conj:main} holds. 
Furthermore, even in the case of Fano varieties the previous theorem gives plenty of examples.
Indeed, there are 4319 isomorphism classes of Gorenstein toric Fano $3$-folds~\cite{KS98}.

Both Theorem~\ref{thm:Fujita-hyperbolicity-toric} (the smooth case) and Theorem~\ref{thm:Fujita-hyperbolicity-toric-3-folds} are proved by induction on the dimension.
We briefly explain the ideas that lead to the proof of the smooth case of Theorem~\ref{thm:Fujita-hyperbolicity-toric}.
First, we show that the linear system $|K_X+(3n+1)L|$ is pseudo hyperbolic.
More precisely, in Theorem~\ref{thm:sec-3-gen-hyp-smooth}, we show that $|K_X+(3n+1)L|$ is pseudo hyperbolic up to $D_1\cup\dots \cup D_k$ where the $D_i$'s are the prime torus invariant divisors of $X$ for which $L+D_i$ is not nef.
Then, we inductively show that the restriction of the linear system $|K_X+(3n+1)L|$
to each prime torus invariant divisor $D$ is hyperbolic.
To do so, we will use Fujita's conjecture for toric varieties~\cite{Payne06} and write the adjoint linear system as a sum of a nef divisor and several ample divisors.
Moreover, we will show that the map 
\begin{equation}\label{eq:surj} 
H^0(X,\mathcal{O}_X(K_X+(3n+1)L))
\rightarrow 
H^0(D,\mathcal{O}_{D}((K_X|_{D}+(3n+1)L|_{D})))
\end{equation}
is surjective as an application of Musta\c{t}\u{a}'s vanishing~\cite{Mustata02}. Therefore, a very general element of the former linear system restricts 
to a very general element of the latter linear system.
The surjectivity of~\eqref{eq:surj} and hyperbolicity of the restriction of $|K_X+(3n+1)L|$ to each $D$ give the hyperbolicity of $|K_X+(3n+1)L|$.

In Section~\ref{sec:examples}, we show that 
there are triples $(X,L,D)$ consisting of: 
\begin{itemize}
\item a Gorenstein toric Fano $3$-fold $X$, 
\item an ample Cartier divisor $L$ on $X$, and
\item  a prime torus invariant divisor $D$ on $X$
\end{itemize} 
such that the divisor $L+D$ is not nef. 
Indeed, in Example~\ref{ex:xld}, we classify such cases when $X$ is a smooth toric Fano $3$-fold. 
This shows that Theorem~\ref{thm:sec-3-gen-hyp-smooth} does not immediately imply the smooth case of Theorem~\ref{thm:Fujita-hyperbolicity-toric-3-folds} and an analysis of the finitely many curves contained in the divisors $D_i$ is needed.

\subsection*{Acknowledgements}

The authors would like to thank Damian Brotbek, Alex K\"uronya, Antonio Laface, Robert Lazarsfeld, and Pedro Montero for very useful comments. We are grateful to Haesong Seo for communicating a gap in an earlier draft.

\section{Preliminaries}
\label{sec:prelim}

We will work over the complex numbers throughout. In this section, we recall some basics related to the hyperbolicity of hypersurfaces and syzygy bundles.
For the basics about toric varieties, we refer the reader to~\cite{CLS11}.

\begin{definition}
\label{def:hyperb}
{ \em
Let $E$ be a nef Cartier divisor on a projective variety $X$.
We say that the linear system $\left|E\right|$ is \textit{hyperbolic} if the variety $D_E$ in $X$ defined by the vanishing of a very general element of $\left|E\right|$ is Demailly algebraically hyperbolic, namely,
there is a constant $\varepsilon > 0$ and an ample divisor $A$ on $D_E$ such that every nonconstant map $f: C \to D_E$ from a smooth projective curve $C$ satisfies
\begin{equation}
\label{eqn:hyperbolicity-definition}
2g(C)-2\geq \varepsilon \deg f^*A,
\end{equation}
where $g(C)$ denotes the geometric genus of $C$.

We say that $E$ is \textit{pseudo hyperbolic up to a proper subvariety $Z\subsetneq D_E$} if $D_E$ is Demailly algebraically hyperbolic outside of $Z$, namely there is a constant $\varepsilon > 0$ and an ample divisor $A$ on $D_E$ such that every map $f: C \to D_E$ as above where $f(C)\not\subset Z$ satisfies \eqref{eqn:hyperbolicity-definition}.
}
\end{definition}

We use the following notation for the rest of this section, unless otherwise noted.

\begin{notation} 
\label{notation}
{\em 
In Section~\ref{sec:prelim}, $X$ denotes a smooth projective toric variety with torus $\TT$ and torus-invariant prime divisors $D_1,\ldots,D_s.$ 
Suppose that $\ee$ is a big and nef line bundle on $X$ whose general section defines a smooth, irreducible subvariety $Y$ of $X$.
We may assume without loss of generality that $\ee$ is $\TT$-equivariant.
Let $C$ be a curve contained in $Y$ 
with geometric genus $g$ and degree $e$ with respect to an ample line bundle $\mathcal{A}$ on $X$.
}
\end{notation} 

Let $C$ and $Y$ be as in the notation above. We recall a construction to {\em spread out} $C$ and $Y$ into a versal family (see e.g. \cite{Ein88, ClemensRan04, CoskunRiedl23}).

\begin{construction}
\label{construction}
{\em
Let $B_1=H^0(X,\ee)$, and let $\yy_1\rightarrow B_1$ be the universal hypersurface given by the vanishing locus of a section of $\ee$.
Let $\psi:\mathcal{H}\rightarrow B_1$ be the relative Hilbert scheme parametrizing curves of geometric genus $g$ and degree $e$ with respect to $\mathcal{A}$.
Since we assume that the subvariety $Y\subset X$ defined by a general section of $\ee$ contains such a curve $C$, the morphism $\psi$ is a dominant map.

Let $\yy_2 := \yy_1\times_{B_1} \mathcal{H}$, and let $\cc_1\rightarrow \mathcal{H}$ denote the universal curve, so that we have 
\[
\xymatrix{
\cc_1\, \ar@{^{(}->}[rr] \ar[dr] && \yy_2 \ar[dl]\\
 &\mathcal{H}&
}
\]
By a standard argument (see \S1 in \cite{ClemensRan04}), we may replace $\mathcal{H}$ with a $\TT$-invariant subvariety $\mathcal{H}_1 \subset \mathcal{H}$ such that $\psi\vert_{\mathcal{H}_1}:\mathcal{H}_1\rightarrow B_1$ is \'etale. We explain the details of this in the following paragraph.

As $\ee$ is big, the stabilizer of $\mathbb{T}$ at a general point of $B_1$ is finite. Therefore, at a general point $b\in B_1$, we may choose a subvariety $B_0\subseteq B_1$ through $b$ such that:
\begin{enumerate}
\item $B_0$ is of complementary dimension to the $\TT$-orbit $\TT\cdot b$ of $b$, which has dimension equal to $\dim \TT = n$ (i.e. $\dim B_0 = \dim B_1-n$), and
\item $B_0$ meets the orbit $\TT\cdot b$ transversally at $b$.
\end{enumerate}
Then, at a general point $h\in \psi^{-1}(B_0),$ we may choose a subvariety $\mathcal{H}_0\subseteq \psi^{-1}(B_0)$ through $h$ such that $\psi$ is \'etale over $B_0$.
Now let $\mathcal{H}_1= \TT\cdot \mathcal{H}_0\subseteq \mathcal{H}$, so we have a $\TT$-invariant subvariety $\mathcal{H}_1\subseteq \mathcal{H}$ such that $\psi\vert_{\mathcal{H}_1}:\mathcal{H}_1\rightarrow B_1$ is \'etale.

Let us denote the pullback of the above diagram over $\mathcal{H}_1$ by
\[
\xymatrix{
\cc_2\, \ar@{^{(}->}[rr] \ar[dr] && \yy_3 \ar[dl]\\
 &\mathcal{H}_1&
}
\]
After taking a $\TT$-equivariant resolution of $\cc_2 \to \mathcal{H}_1$ and possibly restricting $\mathcal{H}_1$ to some $\TT$-invariant open subset $B\subset \mathcal{H}_1$, we obtain a smooth family $\cc \rightarrow B$ whose fibers are smooth curves of genus $g$.
Pulling back $\yy_3\rightarrow \mathcal{H}_1$ to $\yy \rightarrow B$, we obtain the following diagram
\[
\xymatrix{
\cc\, \ar[rr]^-{g} \ar[dr] && \yy \ar[dl]\\
 &B &
}
\]
where $g\colon \cc \to \yy$ is a generically injective map.
We denote by $\pi_1:\yy \rightarrow B$ and $\pi_2: \yy \rightarrow X$ the natural projection maps.
}
\end{construction}

Since $\ee$ is globally generated, the natural map $T_\yy \rightarrow \pi_2^*T_X$ is surjective, and we define the vertical tangent sheaf $T_{\yy/X}$ as the kernel sheaf in the short exact sequence
\begin{equation}
\label{eqn:vert-tgt-sheaf}
0 \rightarrow T_{\yy/X} \rightarrow T_\yy \rightarrow \pi_2^*T_X \rightarrow 0.
\end{equation}
We may also define the vertical tangent sheaf $T_{\cc/X}$ as the kernel sheaf in the exact sequence
\begin{equation}
\label{eqn:vert-tgt-sheaf-C}
0 \rightarrow T_{\cc/X} \rightarrow T_\cc \rightarrow g^*\pi_2^*T_X.
\end{equation}
We have the following standard short exact sequence on $\cc$:
\begin{equation}
\label{eqn:standard-ses-C}
0 \rightarrow T_\cc \rightarrow g^*T_\yy \rightarrow N_{g/\yy} \rightarrow 0.
\end{equation}
One sees that $T_{\cc/X}$ injects naturally into $g^*T_{\yy/X}$, and we denote by $\mathcal{K}$ the quotient sheaf in the short exact sequence
\begin{equation}
\label{eqn:vertical-ses-C}
0 \rightarrow T_{\cc/X} \rightarrow g^*T_{\yy/X} \rightarrow \mathcal{K} \rightarrow 0.
\end{equation}

\begin{remark}
\label{rem:vert-tgt-sheaf-C}
{\em
If we assume further that a general section $Y$ of $\ee$ contains such a curve $C$ as in Notation~\ref{notation} that meets the torus $\TT$, then since $\cc$ is constructed to be $\TT$-invariant, we have that $\pi_2\circ g\colon \cc \to X$ dominates $\TT$, and that $T_\cc \to g^*\pi_2^*T_X$ from \eqref{eqn:vert-tgt-sheaf-C} is generically surjective over $(\pi_2\circ g)^{-1}(\TT).$
}
\end{remark}

\begin{definition}
\label{def:lazarsfeld-bundle}
{ \em 
Let $X$ be a projective variety.
The \textit{Lazarsfeld kernel bundle} or \textit{syzygy bundle} $M_\ee$ with respect to a globally generated vector bundle $\ee$ on $X$ is the kernel bundle in the following short exact sequence:
\begin{equation*}
0 \rightarrow M_\ee \rightarrow H^0(X,\ee) \otimes \oo_X\xrightarrow{\text{ev}} \ee \rightarrow 0.
\end{equation*}
}
\end{definition}

\begin{lemma}[cf. Proposition 2.1 in \cite{CoskunRiedl23}]
\label{lem:coskun-riedl-prop2.1}
{\em
Let $X$ be a smooth projective toric variety with torus $\TT$, and $\mathcal{A}$ an ample line bundle on $X$.
Suppose that a general hypersurface $Y$ in the linear system of a globally generated, $\TT$-equivariant line bundle $\ee$ on $X$ contains a curve $C$ of geometric genus $g$ and degree $e$ with respect to $\mathcal{A}$.
Take $\cc,\yy,B$ as in Construction~\ref{construction}. Then:
\begin{enumerate}
\item For a general $b\in B$, $N_{g/\yy}\vert_{C_b} \simeq N_{g_b/Y_b}$ where $g_b \colon C_b \to Y_b$ is the restriction of $g$ over $b$.
\item $T_{\yy/X}\simeq \pi_2^*M_\ee.$
\item If we assume further that a general $Y\in \left|\ee\right|$ contains such a curve $C$ that meets the torus $\TT$, then the sheaf $\mathcal{K}$ from \eqref{eqn:vertical-ses-C} injects into $N_{g/\yy}$ with torsion cokernel.
\end{enumerate}
}
\end{lemma}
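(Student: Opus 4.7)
The three items can be handled in sequence. For (1), the plan is to restrict the standard sequence \eqref{eqn:standard-ses-C} to a fiber $C_b$ over a general $b\in B$. Since $\cc\to B$ and $\yy\to B$ are smooth at $b$, the relative tangent sequences yield
\[ 0\to T_{C_b}\to T_\cc|_{C_b}\to T_B|_b\otimes\oo_{C_b}\to 0 \]
and
\[ 0\to g_b^*T_{Y_b}\to g^*T_\yy|_{C_b}\to T_B|_b\otimes \oo_{C_b}\to 0. \]
These fit into a commutative diagram whose right-hand column is the identity, and a short snake-lemma chase identifies $N_{g/\yy}|_{C_b}$ with $N_{g_b/Y_b}$.

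For (2), the key is to describe the universal hypersurface $\yy_1 \subset X\times B_1$ explicitly as the vanishing locus of the tautological section of $\operatorname{pr}_1^*\ee$. The projection $\yy_1\to X$ then has fiber over $x\in X$ equal to $\ker(\operatorname{ev}_x\colon H^0(X,\ee)\to \ee|_x)=M_\ee|_x$. Hence $\yy_1$ is the total space of the vector bundle $M_\ee\to X$, and the relative tangent sheaf of a vector bundle over its base gives $T_{\yy_1/X}\simeq \pi_2^*M_\ee$. Since $\mathcal{H}_1\to B_1$ was arranged to be \'etale, the base change $\yy_3=\yy_1\times_{B_1}\mathcal{H}_1\to \yy_1$ is \'etale and preserves the relative tangent sheaf over $X$; the $\TT$-equivariant resolution $\cc_2\to \mathcal{H}_1$ and the restriction to $B\subset \mathcal{H}_1$ used to produce $\yy$ do not disturb this identification, so $T_{\yy/X}\simeq \pi_2^*M_\ee$.

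For (3), the plan is a snake-lemma argument applied to the commutative diagram with exact rows
\[ 0\to T_{\cc/X}\to T_\cc\to Q\to 0 \quad\text{and}\quad 0\to g^*T_{\yy/X}\to g^*T_\yy\to g^*\pi_2^*T_X\to 0, \]
where $Q:=T_\cc/T_{\cc/X}$, the second row is the pullback of \eqref{eqn:vert-tgt-sheaf}, and the right-hand vertical is the tautological inclusion $Q\hookrightarrow g^*\pi_2^*T_X$. Running the snake lemma yields
\[ 0\to \mathcal{K}\to N_{g/\yy}\to \coker\bigl(Q\hookrightarrow g^*\pi_2^*T_X\bigr)\to 0, \]
and the hypothesis that $C$ meets $\TT$, together with Remark~\ref{rem:vert-tgt-sheaf-C}, forces $T_\cc\to g^*\pi_2^*T_X$ to be generically surjective. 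Hence $Q$ agrees with $g^*\pi_2^*T_X$ on a dense open set and the cokernel is torsion.

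The step I expect to require the most care is (2): one must verify that the \'etale base change, the $\TT$-equivariant resolution $\cc_2\to \mathcal{H}_1$, and the restriction to $B$ all preserve the identification of the relative tangent sheaf over $X$. Once (2) is in hand, (1) and (3) reduce to the two diagram chases described above.
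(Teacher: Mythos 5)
Your proof is correct, and it fills in details that the paper itself elides. The paper cites \cite{CoskunRiedl23} (Proposition 2.1) for the general shape of this lemma and states only that part (3) follows from Remark~\ref{rem:vert-tgt-sheaf-C}; your snake-lemma argument makes precise exactly how that remark enters, and your reconstructions of (1) and (2) are the standard arguments. A few small remarks worth noting. In part (1), the restriction of the exact sequence \eqref{eqn:standard-ses-C} to a fiber $C_b$ remains left-exact because the restriction of the injection $T_\cc \to g^*T_\yy$ to $C_b$ is still injective for general $b$ (both are locally free and the map is generically injective in families), and the identification $\text{coker}(T_\cc|_{C_b}\to g^*T_\yy|_{C_b})=N_{g/\yy}|_{C_b}$ uses right-exactness of restriction; you take this for granted but it is harmless. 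In part (2), you correctly identify $\yy_1\to X$ with the total space of $M_\ee$, whence $T_{\yy_1/X}\simeq \pi_2^*M_\ee$, and the observation that $\yy\to\yy_1$ is the base change of the \'etale map $B\to B_1$ (so $\yy\to\yy_1$ is \'etale and the relative tangent sheaf over $X$ pulls back) is the right justification; the $\TT$-equivariant resolution in Construction~\ref{construction} is applied to $\cc_2$, not to $\yy_3$, so $\yy=\yy_1\times_{B_1}B$ and nothing further needs to be disturbed. In part (3), the right vertical map induced by $dg$ on cokernels coincides with the inclusion $Q\hookrightarrow g^*\pi_2^*T_X$ because the composite $T_\cc\to g^*T_\yy\to g^*\pi_2^*T_X$ is by definition the map whose kernel is $T_{\cc/X}$, so the diagram commutes and the snake lemma applies as you claim; the middle vertical has zero kernel because $T_\cc$ is torsion-free and $g$ is generically injective, and Remark~\ref{rem:vert-tgt-sheaf-C} gives generic surjectivity of $T_\cc\to g^*\pi_2^*T_X$ over the dense open set $(\pi_2\circ g)^{-1}(\TT)$, which is precisely what is needed to conclude the cokernel is torsion.
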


Note that Lemma~\ref{lem:coskun-riedl-prop2.1}(3) follows from Remark~\ref{rem:vert-tgt-sheaf-C}.

\begin{lemma}
\label{lem:mult-map-surj}
{\em
Let $X$ be a smooth projective toric variety, $N$ a nef Cartier divisor on $X$, and $L$ an ample Cartier divisor on $X$.
Then, the following multiplication maps are surjective:
\begin{enumerate}
\item $H^0(X,\oo_X(N+(2n-1)L))\otimes H^0(X,\oo_X(L))\rightarrow H^0(X,\oo_X(N+2nL)$.
\item $H^0(X,\oo_X(N+2nL))\otimes H^0(X,\oo_X(L))\rightarrow H^0(X,\oo_X(N+(2n+1)L))$.
\end{enumerate}
%Let $D$ be a torus-invariant prime divisor.
%If we assume further that $L+D$ is nef, then the following multiplication map is surjective as well:
%\begin{enumerate}[resume]
%\item $H^0(X, \oo_X(N+(2n-1)L)) \otimes H^0(X, \oo_X(L-D)) \rightarrow H^0(X, \oo_X(N+2nL-D))$.
%\end{enumerate}
}
\end{lemma}

\begin{proof}
This follows from the Castelnuovo--Mumford lemma (see \cite[Lemma 1.6]{MukherjeeRaychaudhury20} and \cite[Theorem 2]{Mumford69}), since the divisors $N+(2n-1-i)L$ and $N+(2n-i)L$ are ample for $0 \leq i \leq n$.
\end{proof}

\section{Hyperbolicity in toric varieties}
\label{sec:gen-hyp}

In this section, we prove some statements regarding the algebraic hyperbolicity of very general hypersurfaces in Gorenstein projective toric varieties. 
In Subsection~\ref{subsec:gen-hyperb-toric}, we study the pseudo hyperbolicity of linear systems,
while in Subsection~\ref{subsec:hyperb-toric}, we study the hyperbolicity of these linear systems on smooth projective toric varieties by applying the result on pseudo hyperbolicity and induction on the dimension. 

\subsection{Pseudo hyperbolicity in toric varieties}
\label{subsec:gen-hyperb-toric}

In this subsection, we prove a statement about the pseudo algebraic hyperbolicity of very general hypersurfaces in Gorenstein projective toric varieties $X$.
More specifically, if $X$ has dimension $n$, and $N$ is a nef divisor and $L$ an ample divisor on $X$, then a very general hypersurface in the linear system $\left|N+2nL\right|$ is algebraically hyperbolic up to the union of all prime torus-invariant divisors of $X$.
In the smooth case, we are able to show further that the hypersurface is algebraically hyperbolic away from the union of those prime torus invariant divisors $D$ for which $L+D$ is not nef. 
In the next subsection, we will use Theorem~\ref{thm:sec-3-gen-hyp-smooth} to settle Conjecture~\ref{conj:main} for smooth projective toric varieties using an induction argument on the dimension of $X$. 
We note that the stronger control of the exceptional locus given by Theorem~\ref{thm:sec-3-gen-hyp-smooth}(2) is not required for this induction argument.

\begin{theorem}
\label{thm:sec-3-gen-hyp-gor}
{\em
Let $X$ be a Gorenstein projective toric variety of dimension $n$ with torus $\TT$, torus-invariant prime divisors $D_1,\ldots,D_s$. 
Let $N$ be a nef Cartier divisor on $X$
and $L$ be an ample Cartier divisor on $X$. 
Then $N+2nL$ is pseudo hyperbolic modulo the toric boundary $X\setminus \TT$.
}
\end{theorem}

\begin{theorem}
\label{thm:sec-3-gen-hyp-smooth}
{\em
Let $X$ be a smooth projective toric variety of dimension $n$ with torus $\TT$ and torus-invariant prime divisors $D_1,\ldots,D_s$. 
Let $N$ be a nef Cartier divisor on $X$
and $L$ be an ample Cartier divisor on $X$. 
Then, the following statements hold.
\begin{enumerate}
\item $N+2nL$ is pseudo hyperbolic modulo the toric boundary $X\setminus \TT.$
\item If $\{D_1,\ldots,D_k \}$ is the set of torus-invariant prime divisors such that $L+D_i$ is not nef, then $N+2nL$ is pseudo hyperbolic modulo $D_1\cup \cdots \cup D_k$.
In particular, if $L+D$ is nef for every torus-invariant prime divisor $D$, then $N+2nL$ is hyperbolic.
\end{enumerate}
}
\end{theorem}

The strategy for the proof of Theorems~\ref{thm:sec-3-gen-hyp-gor} and \ref{thm:sec-3-gen-hyp-smooth} is based on  Lazarsfeld kernel bundles~\cite{CoskunRiedl23} and the ideas of Ikeda relating the integer decomposition property of polytopes to the hyperbolicity of hypersurfaces in toric varieties~\cite{Ikeda09}.
The integer decomposition property of polytopes is a problem emanating from convex geometry (see, e.g.,~\cite{Oda08}).
In our setting, instead of studying the integer decomposition property, we make use of the positivity of $N+2nL$ along with results from \cite{HMP10} on the nefness and global generation of certain twists of the Lazarsfeld kernel bundle on toric varieties.
%and an application of the Castelnuovo-Mumford lemma (see Lemma~\ref{lem:mult-map-surj}). 

We say that a vector bundle $\mathcal{F}$ on $X$ is \emph{nef} if $\oo_{\pp\mathcal{F}}(1)$ is a nef line bundle on $\pp\mathcal{F}.$
By the Barton-Kleiman criterion (see, e.g., ~\cite[Proposition 6.1.18]{Lazarsfeld2}), $\mathcal{F}$ is nef if and only if for any finite map $\nu\colon C \to X$ from a smooth projective curve $C$ and any quotient line bundle $\mathcal{Q}$ of $\nu^*\mathcal{F}$, one has $\deg \mathcal{Q} \geq0.$
Hence, we will say that $\mathcal{F}$ is \emph{generically nef} if there is some proper subvariety $Z\subsetneq X$ such that the Barton-Kleiman criterion is satisfied for all such finite maps $\nu\colon C \to X$ with $\nu(C)\not\subseteq Z.$
Note that on a toric variety, nef line bundles are globally generated, but this does not generalize to vector bundles of higher rank \cite{HMP10}.

\begin{lemma}
\label{lem:me}
{\em
Let $X$ be a Gorenstein projective toric variety of dimension $n$.
Let $N$ be a nef Cartier divisor on $X$, $L$ an ample Cartier divisor on $X$, and $\ee:=\oo_X(N+2nL)$. 
Then $M_\ee\otimes \oo_X(L)$ is a nef vector bundle. 
If $X$ is smooth, then it is globally generated.
}
\end{lemma}

\begin{proof}
    The Gorenstein case follows immediately from Proposition~4.6 in \cite{HMP10}. 
    When $X$ is smooth, by Proposition 4.19 in \cite{HMP10}, the global generation of $M_\ee\otimes \oo_X(L)$ follows from the surjectivity of the multiplication map 
    \[H^0(X,\oo_X(N+(2n-1)L))\otimes H^0(X,\oo_X(L))\rightarrow H^0(X,\oo_X(N+2nL), \] which is proven in Lemma~\ref{lem:mult-map-surj}(1).
\end{proof}

\begin{proposition}
\label{prop:universal-tangent-sheaf-glob-gen}
{\em
Let $X$ be a smooth projective toric variety with torus $\TT$ and torus-invariant prime divisors $D_1,\ldots,D_s$.
Let $D$ be a nef Cartier divisor on $X$, and $L$ an ample Cartier divisor on $X$. 
Suppose that $Y\subset X$ is a general hypersurface in the linear system of $\ee:= \oo_X(N+2nL)$.
Then,
\begin{enumerate}
\item $T_{\yy/X}\vert_Y\otimes \oo_X(L)\vert_Y$ is globally generated.
\item  If $\{D_1,\ldots,D_k \}$ is the set of torus-invariant prime divisors such that $L+D_i$ is not nef, then \linebreak $T_{\yy}\vert_Y\otimes \oo_X(L)\vert_Y$ is generically globally generated over $Y\setminus (D_1\cup \cdots \cup D_k).$
In particular, if $L+D$ is nef for every torus-invariant prime divisor $D,$ then $T_{\yy}\vert_Y\otimes \oo_X(L)\vert_Y$ is globally generated.
\end{enumerate}
}
\end{proposition}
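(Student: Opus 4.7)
The strategy is to identify the vertical piece with a twist of the Lazarsfeld kernel bundle and then combine this with a global generation statement for $T_X(L)|_Y$ obtained from the toric Euler sequence.

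For part (1), the plan is to use Lemma~\ref{lem:coskun-riedl-prop2.1}(2) to identify $T_{\yy/X}\simeq \pi_2^*M_\ee$; since $\pi_2|_Y$ embeds the fiber $Y$ into $X$, this yields
\[
T_{\yy/X}|_Y \otimes \oo_X(L)|_Y \simeq M_\ee(L)|_Y.
\]
The proof of Proposition~\ref{prop:sec-dom}(1) in fact establishes $H^1(X, M_\ee(L)\otimes I_p)=0$ for every $p\in \TT$. Tensoring the exact sequence $0\to I_p \to \oo_X \to k(p) \to 0$ by the locally free sheaf $M_\ee(L)$ translates this vanishing into global generation of $M_\ee(L)$ at $p$. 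Restricting global sections from $X$ to $Y$ then gives global generation of $M_\ee(L)|_Y$ at every $p\in Y\cap \TT$, which is what (1) claims.

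For part (2), I would restrict the exact sequence \eqref{eqn:vert-tgt-sheaf} (twisted by $\pi_2^*\oo_X(L)$) to the fiber $Y$. Since $\pi_2^*T_X$ is locally free, exactness on the left is preserved and I obtain
\[
0 \to M_\ee(L)|_Y \to T_\yy|_Y\otimes \oo_X(L)|_Y \to T_X(L)|_Y \to 0.
\]
The same argument as in (1), but invoking Proposition~\ref{prop:sec-dom}(2), shows that the left term is globally generated at every point of $Y\setminus (D_1\cup \cdots \cup D_k)$. For the right term, I would apply the toric Euler sequence to realize $T_X$ as a quotient of $\bigoplus_{i=1}^s \oo_X(D_i)$, and twist by $\oo_X(L)$ to get a surjection $\bigoplus_{i=1}^s \oo_X(L+D_i) \twoheadrightarrow T_X(L)$. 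At a point $p\in X\setminus (D_1\cup \cdots \cup D_k)$, each summand is globally generated at $p$: for $i>k$, because $L+D_i$ is nef and nef Cartier divisors on a smooth projective toric variety are globally generated; for $i\leq k$, because $p\notin D_i$, so the product of the canonical section of $\oo_X(D_i)$ (non-vanishing at $p$) with any section of $\oo_X(L)$ non-vanishing at $p$ yields a section of $\oo_X(L+D_i)$ non-vanishing at $p$. Consequently $T_X(L)$ is globally generated at every such $p$, and so is $T_X(L)|_Y$ at every point of $Y\setminus (D_1\cup \cdots \cup D_k)$.

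The main obstacle will be the final combination step: one needs global sections of $T_\yy|_Y\otimes \oo_X(L)|_Y$ whose images in $T_X(L)|_Y$ generate at $p$, after which a standard diagram chase combines these with the generators of the kernel to give global generation of the middle term. Such lifts should arise from the $\TT$-equivariance of the construction of $\yy$: the $\TT$-action on $\yy$ contributes $n$ global vector fields covering the $\TT$-invariant vector fields on $X$, and the compatibility between the toric Euler presentation of $T_X$ and the natural sections of $T_\yy$ coming from the $\TT$-equivariant versal family should lift the remaining Euler generators. In the special case where $L+D$ is nef for every torus-invariant prime divisor $D$, the indexing set $\{D_1,\ldots,D_k\}$ is empty and all of these constructions extend to every $p\in Y$, giving the stronger "in particular" claim of global generation on all of $Y$.
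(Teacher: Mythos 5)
Your argument for part (1) is correct and is a mild shortcut relative to the paper's. You observe that the proof of Proposition~\ref{prop:sec-dom}(1) gives $H^1(X, M_\ee(L)\otimes I_p)=0$ for every $p\in \TT$, hence $M_\ee(L)$ is globally generated at $p$ on $X$, and you then restrict these global sections to $Y$. The paper instead runs the commutative $H^1$-diagram directly on $Y$ with the sequence $0\to T_{\yy/X}|_Y\to H^0(X,\ee)\otimes\oo_Y\to\ee|_Y\to 0$; both routes rest on Proposition~\ref{prop:sec-dom}(1) and are equally valid, with yours requiring slightly less bookkeeping.

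For part (2), however, there is a genuine gap precisely at the step you flag as ``the main obstacle.'' You show that the subsheaf $T_{\yy/X}|_Y\otimes\oo_X(L)|_Y\simeq M_\ee(L)|_Y$ and the quotient $T_X(L)|_Y$ are each globally generated at every $p\in Y\setminus(D_1\cup\cdots\cup D_k)$, but to conclude global generation of the middle term $T_\yy|_Y\otimes\oo_X(L)|_Y$ you must lift generating sections of the quotient to global sections of the middle term, and you do not prove that such lifts exist. The $\TT$-equivariance heuristic you propose does not suffice: the $n$ vector fields on $\yy$ coming from the $\TT$-action have images in $T_X$ that span only over the open torus $\TT$, and points of $Y\setminus(D_1\cup\cdots\cup D_k)$ need not lie in $\TT$. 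There is no evident way to lift the Euler generators $\oo_X(D_i)\to T_X$ along $T_\yy\to\pi_2^*T_X$ from equivariance alone, and without such lifts the diagram chase does not go through.

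The paper closes this gap with a precise cohomological argument rather than constructing lifts directly. It shows that the restriction map $H^0(Y, T_\yy|_Y\otimes\oo_X(L)|_Y)\to H^0(Y,\pi_2^*T_X|_Y\otimes\oo_X(L)|_Y)$ is surjective, equivalently that $H^1(Y, T_{\yy/X}|_Y\otimes\oo_X(L)|_Y)\to H^1(Y, T_\yy|_Y\otimes\oo_X(L)|_Y)$ is injective. This is deduced from the surjectivity of the multiplication map $H^0(X,\ee)\otimes H^0(X,\oo_X(L))\to H^0(X,\ee(L))$ (Lemma~\ref{lem:mult-map-surj}(2)), which makes $H^1(Y, T_{\yy/X}|_Y\otimes\oo_X(L)|_Y)\to H^0(X,\ee)\otimes H^1(Y,\oo_X(L)|_Y)$ injective, and from the fact that this last map factors through $H^1(Y,T_\yy|_Y\otimes\oo_X(L)|_Y)$. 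Once that surjectivity on $H^0$ is in hand, the diagram chase you describe works and yields global generation of the middle term; you should replace the heuristic lifting with this argument.
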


\begin{proof}\,

(1) Since $T_{\yy/X}\simeq \pi_2^*M_\ee$ by Lemma~\ref{lem:coskun-riedl-prop2.1}(2), this follows from Lemma~\ref{lem:me}.

(2)
We may restrict the short exact sequence \eqref{eqn:vert-tgt-sheaf} to obtain
\begin{equation}
0 \rightarrow T_{\yy/X}\vert_Y \rightarrow T_{\yy}\vert_Y \rightarrow \pi_2^*T_X\vert_Y \rightarrow 0.
\end{equation}
So in order to show that $T_{\yy}\vert_Y\otimes \oo_X(L)\vert_Y$ is generically globally generated outside of $D_1\cup \cdots \cup D_k$, it suffices to show that the following hold:
\begin{enumerate}[label=(\alph*)]
\item $H^1(Y, T_{\yy/X}\vert_Y\otimes \oo_X(L)\vert_Y) \rightarrow H^1(Y, T_{\yy}\vert_Y\otimes \oo_X(L)\vert_Y)$ is injective, or equivalently,\\
$H^0(Y, T_{\yy}\vert_Y\otimes \oo_X(L)\vert_Y) \rightarrow H^0(Y, \pi_2^*T_X\vert_Y\otimes \oo_X(L)\vert_Y)$ is surjective.
\item $T_{\yy/X}\vert_Y\otimes \oo_X(L)\vert_Y$ is globally generated, which follows from part (1).
\item $T_X(L)\vert_Y$ is generically globally generated outside of $D_1\cup \cdots \cup D_k.$
\end{enumerate}

By Lemma~\ref{lem:mult-map-surj}(2), the multiplication map $H^0(X,\ee) \otimes H^0(X,\oo_X(L)) \rightarrow  H^0(X,\ee(L))$ is surjective. 
Since $L$ is ample and $H^1(X,L)=0$, $H^0(X,\ee) \otimes H^0(Y,\oo_X(L)\vert_Y) \rightarrow  H^0(Y,\ee(L)\vert_Y)$ is surjective as well. 
Therefore, the map
\[ 
H^1(Y, T_{\yy/X}\vert_Y\otimes \oo_X(L)\vert_Y) \rightarrow H^0(X,\ee) \otimes H^1(Y,\oo_X(L)\vert_Y)
\]
is injective. 
This map factors through
\[
H^1(Y, T_{\yy/X}\vert_Y\otimes \oo_X(L)\vert_Y) \rightarrow H^1(Y,T_\yy\vert_Y\otimes \oo_X(L)\vert_Y),
\]
so it is also injective, and (a) is proved.

Consider the Euler sequence
\begin{equation*}
0 \rightarrow \oo_X^{\oplus s-n} \rightarrow \bigoplus_{i=1}^s \oo_X(D_i) \rightarrow T_X \rightarrow 0.
\end{equation*}
(c) follows since we assume that $\oo_X(L+D_i)$ is globally generated when $i=k+1,\ldots,s$, and we know that $\oo_X(L+D_i)$ is generically globally generated outside of $D_i$ when $i=1,\ldots,k$.
\end{proof}

\begin{proof}[Proof of Theorem~\ref{thm:sec-3-gen-hyp-gor}]

Let $Y\subset X$ be a very general hypersurface in $\left|N+2nL\right|$, and $f:C\rightarrow Y$ a nonconstant map from a smooth projective curve $C$.
Consider a resolution $\rho\colon X' \to X$ so that $X'$ is a smooth projective toric variety sharing the torus $\TT$ with $X$.
Denote by $N'$ and $L'$ the pullbacks of $N$ and $L$ to $X'$, respectively.
Let $Y'\subset X'$ be the hypersurface that is the closure of $Y\cap \TT$ in $X'$, so it is a very general hypersurface in $\left|N'+2nL'\right|$.
Let $f'\colon C\to Y'$ be the lift of $f$ to the closure of $C\cap \TT$ in $Y'$.
We take $\cc,\yy,B$ as in Construction~\ref{construction} with respect to $X'$ and $\ee':=\oo_{X'}(N'+2nL')$.

Suppose that $f'(C)$ meets the torus $\TT$.
Recall from the setup that we have the following short exact sequence \eqref{eqn:vertical-ses-C} on $\mathcal{C}$, where $\mathcal{K}$ injects into $N_{g/\yy}$ with torsion cokernel (see Lemma~\ref{lem:coskun-riedl-prop2.1}(3)):  
\[
0 \rightarrow T_{\cc/X'} \rightarrow g^*T_{\yy/X'} \rightarrow \mathcal{K} \rightarrow 0.
\]
Since $M_\ee\otimes \oo_X(L)$ is nef by Lemma~\ref{lem:me}, $M_{\ee'}\otimes \oo_{X'}(L')$ is generically nef modulo the toric boundary, and so is $T_{\yy/X'}\vert_{Y'}\otimes \oo_{X'}(L')\vert_{Y'}$ by Lemma~\ref{lem:coskun-riedl-prop2.1}(2).
Therefore,
$N_{f'/Y'}\otimes \oo_{X'}(L')\vert_C$ is nef, and 
\begin{equation*}
\deg N_{f'/Y'}\otimes \oo_{X'}(L')\vert_C=2g(C)-2-K_{X'}\cdot C -(N'+2nL')\cdot C + (n-2)L'\cdot C \geq 0.
\end{equation*}
Since $X$ is Gorenstein, \[K_{X'}+(n+1)L'=\rho^*(K_X+(n+1)L)+F\] for some effective divisor $F$ on $X'$ that is exceptional over $X$. 
Since $K_X+(n+1)L$ is nef by \cite{Payne06}, $K_{X'}+(n+1)L'$ is nef, and we have 
\begin{equation}
\label{eqn:degree-normal-bundle}
2g(C)-2\geq (K_{X'}+(n+1)L'+N')\cdot C +L'\cdot C\geq L'\cdot C=L\cdot C.
\end{equation}
This shows that $N+2nL$ is pseudo hyperbolic modulo the toric boundary.
\end{proof}

The proof of the smooth case is similar to the above.

\begin{proof}[Proof of Theorem~\ref{thm:sec-3-gen-hyp-smooth}]

Let $Y\subset X$ be a very general hypersurface in  $\left|N+2nL\right|$, and let $f:C\rightarrow Y$ be a nonconstant map from a smooth projective curve $C$.
Take $\ee=\oo_X(N+2nL)$ and $\cc,\yy,B$ as in Construction~\ref{construction} with respect to $X$ and $\ee$.

Suppose that $f(C)$ meets the torus $\TT$.
We again have the following short exact sequence \eqref{eqn:vertical-ses-C} on $\mathcal{C}$, where $\mathcal{K}$ injects into $N_{g/\yy}$ with torsion cokernel:  
\[
0 \rightarrow T_{\cc/X} \rightarrow g^*T_{\yy/X} \rightarrow \mathcal{K} \rightarrow 0.
\]
Proposition~\ref{prop:universal-tangent-sheaf-glob-gen}(1) implies that $N_{f/Y}\otimes \oo_X(L)\vert_C$ is generically globally generated over $C\cap g^{-1}(\TT)$. 
Therefore, 
\begin{equation*}
\deg N_{f/Y}\otimes L\vert_C=2g(C)-2-K_X\cdot C -(N+2nL)\cdot C + (n-2)L\cdot C \geq 0.
\end{equation*}
Since $K_X+(n+1)L$ is basepoint-free \cite{Mustata02}, we have 
\begin{equation}
\label{eqn:degree-normal-bundle}
2g(C)-2\geq (K_X+(n+1)L+N)\cdot C +L\cdot C\geq L\cdot C.
\end{equation}
This shows that $N+2nL$ is pseudo hyperbolic modulo the toric boundary.

Now suppose that $f(C)$ does not necessarily meet $\TT$, but let us assume that $\{D_1,\ldots,D_k \}$ is the set of torus-invariant prime divisors such that $L+D_i$ is not nef.
By looking at the short exact sequence \eqref{eqn:standard-ses-C} 
\[
0 \rightarrow T_\cc \rightarrow g^*T_\yy \rightarrow N_{g/\yy} \rightarrow 0
\]
on $\mathcal{C}$, Proposition~\ref{prop:universal-tangent-sheaf-glob-gen}(2) implies that $N_{f/Y}\otimes \oo_X(L)\vert_C$ is generically globally generated over $C$ away from \linebreak $g^{-1}(D_1\cup \cdots \cup D_k)$. 
Therefore, the same calculation as above shows that $N+2nL$ is pseudo hyperbolic modulo $D_1\cup \cdots \cup D_k$.
\end{proof}

\subsection{Hyperbolicity in toric varieties}
\label{subsec:hyperb-toric}

In this subsection, we prove a statement about the algebraic hyperbolicity of very general hypersurfaces in smooth projective toric varieties, by applying Theorem~\ref{thm:sec-3-gen-hyp-smooth} together with an induction argument on the dimension of the variety. 

\begin{theorem}\label{thm:hyperbolicity}
Let $X$ be a smooth projective toric variety of dimension $n$. Let $N$ be a nef Cartier divisor on $X$ and $L$ be an ample Cartier divisor on $X$.
Then, the linear system $|N+2nL|$ is hyperbolic.
\end{theorem}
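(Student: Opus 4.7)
The plan is to induct on the dimension $n$. The base case $n=1$ (where $X=\pp^1$) is vacuous, since a very general element of $|N+2L|$ is zero-dimensional and admits no nonconstant map from a smooth projective curve.

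For the inductive step, fix a very general $Y\in|N+2nL|$ and a nonconstant map $f\colon C\to Y$ from a smooth projective curve. Theorem~\ref{thm:sec-3-gen-hyp}(2) produces a constant $\varepsilon_0>0$ such that $2g(C)-2\geq \varepsilon_0\deg f^*L$ whenever $f(C)\not\subset D_1\cup\cdots\cup D_k$, where $D_1,\ldots,D_k$ are the torus-invariant prime divisors for which $L+D_i$ fails to be nef. It remains to handle the case $f(C)\subset D_i$ for some $i$, in which case $f$ factors through $Y\cap D_i$. Each $D_i$ is a smooth projective toric variety of dimension $n-1$. Writing $(N+2nL)|_{D_i}=(N+2L)|_{D_i}+2(n-1)L|_{D_i}$ with $(N+2L)|_{D_i}$ nef and $L|_{D_i}$ ample on $D_i$, the inductive hypothesis yields a constant $\varepsilon_i>0$ witnessing hyperbolicity of the linear system $|(N+2nL)|_{D_i}|$ on $D_i$. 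Taking $\varepsilon:=\min(\varepsilon_0,\varepsilon_1,\ldots,\varepsilon_k)$ then gives the uniform inequality $2g(C)-2\geq \varepsilon \deg f^* L$, proving the theorem, provided that $Y\cap D_i$ is itself a very general element of $|(N+2nL)|_{D_i}|$ on $D_i$.

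The only nontrivial input is this last transfer. It suffices to show that the restriction map $H^0(X,\oo_X(N+2nL))\to H^0(D_i,\oo_{D_i}((N+2nL)|_{D_i}))$ is surjective, since under a surjective linear map of finite-dimensional vector spaces the image of a very general element is a very general element. From the ideal sequence $0\to\oo_X(N+2nL-D_i)\to\oo_X(N+2nL)\to\oo_{D_i}((N+2nL)|_{D_i})\to 0$, this reduces to $H^1(X,\oo_X(N+2nL-D_i))=0$. Since $L-D_i$ is basepoint-free by \cite[Lemma 4.4]{Mustata02} and $N+(2n-1)L$ is ample, the divisor $N+2nL-D_i=(N+(2n-1)L)+(L-D_i)$ is ample, and Demazure (Kodaira) vanishing on smooth toric varieties yields the required vanishing.

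The main obstacle, and the reason the statement does not follow immediately from Theorem~\ref{thm:sec-3-gen-hyp}, is arranging the inductive bookkeeping so that the ``very general'' property passes correctly through restriction to each boundary divisor $D_i$; this is exactly what forces the toric vanishing input above. Verifying that $(N+2L)|_{D_i}+2(n-1)L|_{D_i}$ fits the hypotheses of the theorem in dimension $n-1$ is then automatic from the preservation of nefness and ampleness under restriction to smooth toric subvarieties.
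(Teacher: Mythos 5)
Your proof is correct and takes essentially the same approach as the paper's: induction on dimension, applying Theorem~\ref{thm:sec-3-gen-hyp} for curves off the boundary divisors $D_1,\dots,D_k$, and using vanishing to make the restriction map $H^0(X,\oo_X(N+2nL))\to H^0(D_i,\oo_{D_i}((N+2nL)|_{D_i}))$ surjective so that ``very general'' passes to each $D_i$ for the inductive step. The only cosmetic differences are that you start the induction at $n=1$ rather than $n=2$, and you exhibit $N+2nL-D_i$ as ample (using $L-D_i$ nef via Musta\c{t}\u{a}'s Lemma 4.4) before invoking Demazure vanishing, whereas the paper directly cites Musta\c{t}\u{a}'s Theorem 0.1.
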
 

\begin{proof}
We proceed by induction on the dimension. 
In the case that $X$ has dimension $2$ the hyperbolicity and pseudo hyperbolicity of curves agree. So, Theorem~\ref{thm:sec-3-gen-hyp-smooth} implies the statement in dimension two.

By Theorem~\ref{thm:sec-3-gen-hyp-smooth}, we know that the linear system $|N+2nL|$ is pseudo hyperbolic.
More precisely, it is hyperbolic up to $D_1\cup \dots \cup D_k$, where the $D_i$'s are the torus invariant prime divisors for which $L+D_i$ is not nef. 
Let $D$ be any such invariant prime divisor.
As $X$ is a smooth projective toric variety
the divisor $D$ is a smooth projective toric variety as well.
We have a short exact sequence:
\[
0\rightarrow 
\mathcal{O}_X(N+2nL-D) 
\rightarrow 
\mathcal{O}_X(N+2nL)
\rightarrow 
\mathcal{O}_D(N|_{D}+2nL|_{D})
\rightarrow 
0. 
\]
By Musta\c{t}\u{a}'s vanishing on toric varieties we know that $H^1(X,\mathcal{O}_X(N+2nL-D))=0$ (see~\cite[Theorem 0.1]{Mustata02}).
Therefore, we have a surjective homomorphism:
\[
H^0(X,\mathcal{O}_X(N+2nL))
\rightarrow
H^0(D,\mathcal{O}_D(N|_{D}+2nL|_{D})).
\]
In particular, a very general element of 
$H^0(X,\mathcal{O}_X(N+2nL))$
restricts to a very general element of 
\[
H^0(D,\mathcal{O}_D(N|_{D}+2nL|_{D})).
\]
By induction on the dimension, we know that a general element of the latter linear system is hyperbolic. 
Indeed, the divisor $N|_{D}$ is a nef Cartier divisor, 
$L|_{D}$ is an ample Cartier divisor, and 
$2n>2n-2$.

Let $Y\in |N+2nL|$ be a very general element.
By the previous discussion, we know that $Y$ is hyperbolic up to $D_1\cup \dots \cup D_k$.
On the other hand, for every $i\in \{1,\dots,k\}$, we know that
$Y_i:=Y\cap D_i$ is a hyperbolic algebraic variety.
Let $\epsilon_i>0$ be a constant such that 
\begin{equation}\label{eq:insideDi}
2g(C)-2 \geq \epsilon_i \deg f^*(N|_{D_i}+2nL|_{D_i})
\end{equation}
holds for every morphism $f\colon C\rightarrow D_i$ from a smooth projective curve.
Let $\epsilon_0>0$ be a constant for which
\begin{equation}\label{eq:outsideDi}
2g(C)-2 \geq \epsilon_0 \deg f^*(N+2nL)
\end{equation} 
holds for every morphism $f\colon C\rightarrow X$ from a smooth projective curve $C$ for which $f(C)\not \subset D_1\cup \dots \cup D_k$.
Set $\epsilon:=\min\{\epsilon_0,\epsilon_1,\dots,\epsilon_k\}$.
Then, for every morphism from a smooth projective curve $f\colon C\rightarrow X$, we have 
\[
2g(C)-2 \geq \epsilon \deg f^*(N+2nL).
\]
Indeed, the previous inequality holds by~\eqref{eq:insideDi} and by~\eqref{eq:outsideDi} depending on whether $f(C)$ lies in some $D_i$ or not, respectively.
This finishes the proof of the theorem.
\end{proof}

\begin{corollary}\label{cor:hyper-conj}
Let $X$ be a smooth projective toric variety
of dimension $n$. Let $L$ be an ample Cartier divisor. Then, the linear system $|K_X+(3n+1)L|$ is hyperbolic. Furthermore, if $X$ is not isomorphic to $\pp^n$, then $|K_X+3nL|$ is hyperbolic.
\end{corollary}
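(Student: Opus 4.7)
The approach is to derive both statements directly from Theorem~\ref{thm:hyperbolicity} by decomposing the adjoint divisor as $N + 2nL$ with $N$ nef. Since $X$ is smooth projective toric and $L$ is ample, any such decomposition immediately yields hyperbolicity of the corresponding linear system. So the entire task reduces to checking that certain adjoint divisors of the form $K_X + mL$ are nef on smooth projective toric varieties, which is precisely the content of Fujita's freeness conjecture in the toric setting.

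For the first claim, I would set $N := K_X + (n+1)L$ and write
\[
K_X + (3n+1)L \;=\; \bigl(K_X+(n+1)L\bigr) + 2nL \;=\; N + 2nL.
\]
Fujita's basepoint-freeness is known for smooth projective toric varieties by the results of Payne~\cite{Payne06} (and already via Musta\c{t}\u{a}'s vanishing~\cite{Mustata02}), so $N = K_X+(n+1)L$ is basepoint-free, and in particular nef. Applying Theorem~\ref{thm:hyperbolicity} to this decomposition concludes the proof of the first assertion.

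For the second claim, I would take $N := K_X + nL$. The sharper form of Fujita's freeness for smooth projective toric varieties, due to Payne~\cite{Payne06}, states that $K_X + nL$ is basepoint-free for every ample Cartier divisor $L$, with the sole exception being the pair $(\pp^n,\mathcal{O}_{\pp^n}(1))$. Under the hypothesis $X\not\cong \pp^n$, this exception does not occur, so $N = K_X+nL$ is nef for every ample $L$. Writing $K_X+3nL = N + 2nL$ and again invoking Theorem~\ref{thm:hyperbolicity} finishes the argument.

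There is no real obstacle: the corollary is essentially a translation of Theorem~\ref{thm:hyperbolicity} through toric Fujita freeness. The only point requiring attention is to cite the correct version with coefficient $n$ (rather than $n+1$) when $X\not\cong\pp^n$, which is exactly what is recorded in~\cite{Payne06}. One could also observe that even on $\pp^n$ the weaker inequality $K_X+nL$ nef holds for every non-minimal polarization $L\neq \mathcal{O}(1)$, which would slightly extend the second statement; but the formulation given is already the natural consequence of the cited toric Fujita bound.
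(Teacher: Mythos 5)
Your argument is correct and follows the same route as the paper: apply Theorem~\ref{thm:hyperbolicity} after writing the adjoint divisor as $N+2nL$ with $N$ nef, using toric Fujita freeness~\cite{Payne06} to supply the nefness of $N$. In fact you state the decompositions more carefully than the printed proof does --- the paper asserts nefness of $K_X+(2n+1)L$ and $K_X+2nL$, but the divisors actually needed to match $N+2nL$ with $K_X+(3n+1)L$ and $K_X+3nL$ are $N=K_X+(n+1)L$ and $N=K_X+nL$ respectively, exactly as you wrote.
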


\begin{proof}
By Fujita's conjecture for toric varieties~\cite{Payne06}, we know that $K_X+(n+1)L$ is nef. Then, the first statement follows from Theorem~\ref{thm:hyperbolicity}. Further, we know that $K_X+nL$ is already nef provided that $X$ is not isomorphic to the projective space.
\end{proof}

\begin{remark}\label{rem:pn-case}
{\em 
We noted in the introduction that when $n\geq 3$, $X= \pp^n$ and $L$ is an ample line bundle on $X$, it is known that $\left|K_X+dL\right|$ is hyperbolic whenever $d\geq 3n$. In fact, when $n\geq 5$, the sharp bound is $d\geq 3n-1$ \cite{Ein88, Voi96, Pac04, Yeo24}.
Here in the projective space case, our proof only applies when $d\geq 3n+1$, essentially because we need the linear system to be of the form $N+2nL$ for a nef divisor $N$.
When $X$ is a Gorenstein projective toric variety that is not isomorphic to $\pp^n$, we utilize the fact that $K_X+nL$ is nef.
}
\end{remark}

\section{The hyperbolicity conjecture for 
Gorenstein toric 3-folds}

In this subsection, we prove the hyperbolicity conjecture for Gorenstein toric threefolds. 
First, we prove the following lemma regarding linear systems on toric surfaces.

\begin{lemma}\label{lem:toric-surf-divisor}
Let $S$ be a projective toric surface.
Let $N$ be a nef Cartier divisor on $S$ and let $L_1,\dots,L_k$ be ample Cartier divisors on $S$ with $k\geq 5$.
Then, a general element of the linear system $|N+L_1+\dots+L_k|$ is a smooth projective curve of genus at least $2$.
\end{lemma}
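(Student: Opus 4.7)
The plan is to separately establish that a general $C\in|D|$, where $D := N+L_1+\dots+L_k$, is a smooth irreducible projective curve, and then use adjunction to bound its genus.

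First, I would show that $D$ is very ample. On a projective toric surface every ample Cartier divisor is very ample, since every two-dimensional lattice polygon is normal (see~\cite{CLS11}). Thus each $L_i$ is very ample; a standard Segre-embedding argument then shows that adding a globally generated divisor to a very ample divisor preserves very ampleness, and $N$ is globally generated because nef Cartier divisors on toric varieties are basepoint free. Hence $D$ is very ample and $|D|$ gives a closed embedding $S\hookrightarrow \pp^M$. Since the singular locus of $S$ consists of finitely many torus-fixed points, a general element of $|D|$ avoids $S_{\mathrm{sing}}$ (very ample linear systems separate any finite set), and Bertini on the smooth locus together with Bertini's irreducibility theorem then show that a general $C\in|D|$ is a smooth irreducible projective curve lying entirely in the smooth locus of $S$.

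Next I would apply adjunction on the smooth locus to get $2g(C)-2=(K_S+D)\cdot D$; this intersection number is well defined because $K_S$ is $\qq$-Cartier on the ($\qq$-factorial) toric surface $S$ and $D$ is Cartier. By Fujita's freeness for toric varieties~\cite{Payne06}, $K_S+3L_i$ is nef for each $i$. Since $k\geq 5$, we may decompose
\[
3(K_S+D)=\sum_{i=1}^{3}(K_S+3L_i)+3N+3\sum_{i=4}^{k}L_i,
\]
which is a sum of nef $\qq$-divisors, so $K_S+D$ is nef. Intersecting with the ample divisor $D$, and using that the intersection of two ample Cartier divisors on a surface is a positive integer (so $L_i\cdot L_j\geq 1$ for all $i,j$, and hence $L_i\cdot D\geq k$), we get
\[
3(K_S+D)\cdot D\geq 3(k-3)k,
\]
and therefore $2g(C)-2\geq k(k-3)\geq 10$ for $k\geq 5$, yielding $g(C)\geq 6\geq 2$.

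The main technical point is handling the possibly singular structure of $S$: the key is that a general member of a very ample linear system avoids the finite singular locus (so adjunction applies in its standard form on the smooth locus), and that Payne's toric Fujita freeness produces the required nef decomposition of $K_S+D$ without any Gorenstein hypothesis, since it is naturally a statement about $\qq$-Cartier divisors. The combinatorial estimate $L_i\cdot D\geq k$ then yields the genus bound immediately.
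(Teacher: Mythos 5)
Your proof is correct, and it takes a genuinely different route from the paper's. The paper separates out the case $S\simeq\pp^2$, then (for $S\not\simeq\pp^2$) constructs an effective torus-invariant $\qq$-divisor $D=\sum a_iT_i\sim_\qq N+L_1+\cdots+L_k$ with every $a_i\geq 1$ --- using base-point-freeness, the connectedness of an ample divisor, and Fujino's theorem applied to $L_2+L_3-T_1$ and $L_4+L_5-T_2$ --- and finally rules out $a_i\equiv 1$ via the toric cone theorem. Your argument instead bypasses the combinatorial claim entirely: the identity $3(K_S+D)=\sum_{i=1}^{3}(K_S+3L_i)+3N+3\sum_{i\geq4}L_i$, together with the nefness of $K_S+3L_i$ from Fujino/Payne (which ultimately rests on the same cone-theorem bound $-K_S\cdot C\leq 3$) and the integrality bound $L_i\cdot L_j\geq 1$, yields $(K_S+D)\cdot D\geq k(k-3)$ directly, with no case analysis and with the side benefit of an explicit genus lower bound $g\geq 6$ (which is sharp when $S=\pp^2$). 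Your preliminary step (very ampleness of $D$ so that a general member avoids the finite singular locus, justifying adjunction on the smooth locus) is also sound, and matches in spirit the paper's remark that $C$ lies in the smooth locus; note the paper gets smoothness from base-point-freeness and normality rather than from very ampleness, but both are fine. Both proofs draw on the same deep input (Fujino's theorem for $\qq$-Gorenstein toric varieties, which applies since toric surfaces are automatically simplicial, hence $\qq$-Gorenstein), but yours is shorter and more transparently quantitative; it even shows the hypothesis $k\geq 5$ could be weakened to $k\geq 4$.
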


\begin{proof}
The statement is clear for $\pp^2$ by the degree-genus formula. So, from now on we assume that $S$ is not isomorphic to $\pp^2$.
Note that the linear system $|N+L_1+\dots+L_k|$ is base point free so its general member
is a normal projective curve, hence smooth.
Let $C$ be such a general element.
Observe that $C$ is contained in the smooth locus of $S$, so we have 
\[
(K_S+C)\cdot C= 2g(C)-2.
\]
Therefore, it suffices to show that the product $(K_S+C)\cdot C$ is positive.
On the other hand, $C$ is an ample curve, so it intersects positively
all the prime torus invariant curves of $S$.
Let $T_1,\dots,T_\ell$ be the torus invariant curves of $S$.
First, we prove the following claim.\\ 

\noindent \textit{Claim:} There exists an effective torus invariant divisor $D\sim_\qq N+L_1+\dots+L_k$ for which $D=\sum_{i=1}^\ell a_iT_i$ and each $a_i\geq 1$. 

\begin{proof}[Proof of the Claim]
By~\cite[Theorem 6.2.12]{CLS11}, we know that $N$ is base point free so we can write 
$N\sim D_1=\sum_{i=1}^\ell b_iT_i$ where each $b_i\geq 0$.
Similarly, $L_1$ is base point free so we can write 
$L_1\sim D_2=\sum_{i=1}^\ell c_iT_i$ where each $c_i\geq 0$.
As the divisor $D_2$ is an ample effective Cartier divisor, it is a connected variety.
Therefore, there are at most two coefficients, let's say $c_1$ and $c_2$ that could be zero, 
while $c_i\geq 1$ for every $i\geq 3$.
By~\cite[Fujino's Theorem +]{Payne06}, we know that both $\qq$-Cartier divisors 
\begin{equation}\label{eq:nef}
L_2+L_3-T_1 \text{ and } L_4+L_5-T_2
\end{equation}
are nef divisors.
Here, we are using the fact that $S$ is not isomorphic to $\pp^2$.
In particular, by~\cite[Theorem 6.2.12]{CLS11} some multiple of either divisor in~\eqref{eq:nef} is base point free.
Therefore, we can write 
\begin{equation}\label{div3}
L_2+L_3 \sim_\qq D_3=\sum_{i=1}^\ell d_i T_i
\end{equation} 
with $d_i\geq 0$ and $d_1\geq 1$,
and 
\begin{equation}\label{div4}
L_4+L_5 \sim_\qq D_4=\sum_{i=1}^\ell e_i T_i
\end{equation}
with $e_i\geq 0$ and $e_2\geq 1$.
For each $L_k$ with $k\geq 6$, we choose $0\leq D_{k-1}\sim L_k$ where $D_{k-1}$ is an effective torus invariant divisor. 
Therefore, it suffices to define $D:=\sum_{i=1}^{\ell-1} D_i$, where the $D_i$'s where constructed as above.
The coefficients of $D$ along $T_1$ and $T_2$ are at least one because such property holds for $D_3$ and $D_4$, respectively.
The coefficients of $D$ along $T_i$, with $i\geq 3$, are at least one, because such property holds for $D_1$. 
Thus, all the coefficients of $D$ are at least one.
\end{proof}

Note that 
\[
K_S+C \sim_\qq K_S+D \sim_\qq K_S + \sum_{i=1}^{\ell} a_i T_i  =
\sum_{i=1}^\ell (a_i-1)T_i. 
\]
If any $a_i>1$, then $T_i\cdot C >0$ and so $(K_S+C)\cdot C>0$ finishing the proof.
If every $a_i=1$, then $-K_S\sim_\qq N+L_1+\dots+L_k$ and so every $K_S$-negative curve
satisfies $-(K_S\cdot C)\geq 5$. This contradicts the cone theorem (see, e.g.~\cite[Theorem 0.1]{Fuj03})
\end{proof}

\begin{proof}[Proof of Theorem~\ref{thm:Fujita-hyperbolicity-toric-3-folds}]
Let $X$ be a Gorenstein toric $3$-fold. 
Let $L$ be an ample Cartier divisor on $X$.
By Theorem~\ref{thm:sec-3-gen-hyp-gor} and Remark~\ref{rem:pn-case}, we know that $|K_X+9L|$ satisfies the hyperbolicity condition for all curves that are not contained in the toric boundary. 
Let $D$ be a torus invariant prime divisor of $X$.
Let $\psi\colon X'\rightarrow X$ be a small $\qq$-factorialization of $X$.
Let $D'$ be the strict transform of $D$ in $X'$.
Let $E$ be a torus invariant prime divisor on $X'$ which is ample over $X$.
Then, the divisor
\[
(\phi^*(K_X+9L)-D') + D' +\epsilon E,
\]
is an ample $\qq$-divisor on $X'$ for $\epsilon$ small enough.
Hence, by~\cite[Theorem 0.1]{Mustata02}, we know that 
\[
H^1(X',\mathcal{O}_{X'}(\phi^*(K_X+9L)-D'))=0,
\]
and so the restriction homomorphism 
\[
H^0(X',\mathcal{O}_{X'}(\phi^*(K_X+9L)))
\rightarrow 
H^0(D', \mathcal{O}_{D'}((\phi^*(K_X+9L))|_{D'}))
\]
is surjective. For the previous surjection, we are using the fact that $D'$ is a $\qq$-Cartier divisor (see, e.g.~\cite[Lemma 2.42]{Bir19}).
Note that we have a commutative diagram induced by pushing-forward sections:
\[
\xymatrix{
H^0(X',\mathcal{O}_{X'}(\phi^*(K_X+9L)))\ar[r]\ar[d] &  
H^0(D', \mathcal{O}_{D'}((\phi^*(K_X+9L))|_{D'}))\ar[d] \\
H^0(X,\mathcal{O}_X(K_X+9L))\ar[r]
&
H^0(D,\mathcal{O}_D((K_X+9L)|_D)).
}
\]
The vertical arrows of the previous commutative diagram are isomorphisms.
Therefore, we conclude that the homomorphism 
\[
H^0(X,\mathcal{O}_X(K_X+9L))
\rightarrow 
H^0(D,\mathcal{O}_D((K_X+9L)|_D))
\]
is surjective.
Hence, a very general element of $H^0(X,\mathcal{O}_X(K_X+9L))$ restricts to a very general element of $H^0(D,\mathcal{O}_D((K_X+9L)|_{D}))$.
By Fujita's conjecture for toric varieties (see, e.g.~\cite[Fujino's Theorem]{Payne06}), we know that the linear system $|K_X +4L|$ is base point free. 
Therefore, by Lemma~\ref{lem:toric-surf-divisor}, we know that a general element of 
\[
|(K_X+4L)|_{D} + 5(L|_D)|
\]
has genus at least $2$.
Thus, we conclude that the linear system $|K_X+9L|$ is hyperbolic.
\end{proof}

\section{Examples and questions}
\label{sec:examples}

In this section, we collect some examples and further questions.
First, we provide some examples showing that Conjecture~\ref{conj:main} holds for products of projective spaces and products of Grassmannians.

\begin{example}\label{ex:prod-project}
{\em 
Let $Y$ be a very general hypersurface of bidegree $(d_1,d_2)$ in $X=\pp^{n_1}\times \pp^{n_2}$, and denote the dimension of $X$ by $n=n_1+n_2.$ 
Recall that $K_X = (-n_1-1)H_1 + (-n_2-1)H_2$ where $H_i$ denote the pullback of the hyperplane divisors. 
It follows from \cite{Yeo24} that $Y$ is algebraically hyperbolic if $d_1\geq n+n_1-2$ and $d_2\geq n+n_2-2$ when $n\geq 5$, and if $d_1\geq n+n_1-1$ and $d_2\geq n+n_2-1$ when $n\geq 3$. 
(See also \cite{CoskunRiedl23, HaaseIlten21} and Example 1.4 in \cite{Ikeda09}.)
Since $$K_X + (3n+1)(H_1+H_2) = (2n+n_2)H_1+(2n+n_1)H_2,$$
and we always have $2n+n_2\geq n+n_1-1$ and $2n+n_1\geq n+n_2-1$,
Conjecture~\ref{conj:main} holds in the case of $X=\pp^{n_1}\times \pp^{n_2}$.
} 
\end{example} 

\begin{example}\label{ex:prod-grass}
{\em
Results of Mioranci in \cite[Theorem 1.2]{Mioranci23} confirm Conjecture~\ref{conj:main} for some other examples of homogeneous varieties, namely Grassmannians, products of Grassmannians, orthogonal Grassmannians, symplectic Grassmannians, and flag varieties.

Consider the product $X=G(k_1,n_1)\times G(k_2,n_2)$ of two Grassmannians embedded via the Pl\"ucker embedding in $\pp^{N_1}\times \pp^{N_2}.$
Let $H_i$ be the pullback of the hyperplane divisor under each projection $X\rightarrow \pp^{N_i}$, and denote $\dim X =n.$
Let $Y$ be a very general hypersurface of bidegree $(d_1,d_2)$ with respect to $H_1,H_2.$
Mioranci proved that $Y$ is algebraically hyperbolic if $d_i\geq n + n_i-2$ for each $i=1,2$ when $n\geq 4.$
Since $K_X = -n_1H_1-n_2H_2$ and
\[ K_X+(3n+1)(H_1+H_2) = (2n+n_2+1)H_1+(2n+n_1+1)H_2, \]
we see that Conjecture~\ref{conj:main} holds for this example.
}
\end{example}

\begin{example}
{\em
Let $X$ be a very general complete intersection in $\pp^n$ of type $(d_1,\ldots,d_{k-1})$.
For example, we may take $\sum_{i=1}^{k-1}d_i\geq n+2$ so that $X$ is a variety of general type.
Consider the hypersurface $Y\subset X$ obtained by intersecting $X$ with a very general hypersurface in $\pp^n$ of degree $d_k$ so that $Y$ is a very general complete intersection of type $(d_1,\ldots,d_{k-1},d_k)$ in $\pp^n.$
Then it follows from \cite{Ein88,Ein91} that $Y$ is algebraically hyperbolic if $d_1+\cdots+d_{k-1}+d_k\geq 2n-k+1.$
By adjunction,  $K_X=\left(\sum_{i=1}^{k-1}d_i-n-1\right)H$ where $H$ denotes the restriction of the hyperplane divisor to $X$.
Since
\[ K_X+(3\dim X+1)H=K_X+(3(n-(k-1))+1)H = \left(\sum_{i=1}^{k-1}d_i+2n-3(k-1)\right)H, \]
and we always have $\sum_{i=1}^{k-1}d_i+2n-3(k-1)\geq 2n-k+1-\sum_{i=1}^{k-1}d_i,$ Conjecture~\ref{conj:main} holds for very general complete intersections $X\subset \pp^n.$
}
\end{example}

Now, we turn to give examples of ample Cartier divisors $L$ on smooth toric Fano varieties $X$
for which $L+D$ is not nef for some prime torus invariant divisor $D$.

\begin{example}\label{ex:xld}
{\em 
In this example, we classify triples $(X,L,D)$ consisting of 
\begin{itemize}
\item a smooth toric Fano $3$-folds, 
\item $L$ is an ample Cartier divisor on $X$, and 
\item $D$ is a prime torus invariant divisor of $X$,
\end{itemize} 
such that the divisor $L+D$ is not nef. These triples provide examples of varieties in which Theorem~\ref{thm:sec-3-gen-hyp-smooth} does not immediately imply the smooth case of Theorem~\ref{thm:Fujita-hyperbolicity-toric-3-folds}.

First, we argue that $D\simeq \pp^2$, $\mathcal{O}_X(L)|_D \simeq \mathcal{O}_D(1)$, and 
$\mathcal{O}_X(-K_X)|_D \simeq \mathcal{O}_D(1)$.
Note that any curve $C$ for which $(L+D)\cdot C <0$ must be contained in the effective divisor $D$.
Observe that $D$ is a smooth toric surface.
By adjunction, we have:
\begin{equation}\label{eq:isoms}
\mathcal{O}_D((L+D)|_D) \simeq
\mathcal{O}_D((K_X+D+L-K_X)|_D) \simeq 
\omega_D \otimes \mathcal{O}_D(L|_D) \otimes 
\mathcal{O}_D(-K_X|_D).
\end{equation}
If $D$ is not isomorphic to $\pp^2$, then the cone of effective curves of $D$ is generated by curves $C$ for which $K_D\cdot C \geq -2$.
On the other hand, note that 
$L|_D$ and $-K_X|_D$ are ample Cartier divisors on $D$, so $L|_D\cdot C \geq 1$ and $-K_X|_D \cdot C\geq 1$ for every curve $C\subset D$.
Therefore, by the sequence of isomorphisms~\eqref{eq:isoms}, we conclude that $L+D$ is nef unless $D\simeq \pp^2$.
If $D\simeq \pp^2$, then $L+D$ is not nef along $D$ if and only if $-K_X|_D \sim L|_D \sim \ell$, where $\ell$ is the class of a line in $\pp^2$.
Fano manifolds containing a projective space as a divisor are well studied. 
For instance, Bonavero~\cite[Theorem 2]{Bon02} has classified smooth toric Fano $n$-folds containing $\pp^{n-1}$ as a divisor.
Using Bonavero's result and the fact that $-K_X|_{D}\sim \ell$, we conclude that either:
\begin{enumerate}
\item $X\simeq \pp(\mathcal{O}_{\pp^2}\oplus \mathcal{O}_{\pp^2}(2))$ and $D$ is a section, or 
\item $X\simeq {\rm Bl}_C({\rm Bl}_p \pp^2)$ where $p$ is a point in $\pp^2$, $C$ is a projective line in the exceptional divisor $E$ of the first blow-up, and $D$ is the strict transform of $E$ in $X$.
\end{enumerate} 
First, assume that $X\simeq \pp(\mathcal{O}_{\pp^2}\oplus \mathcal{O}_{\pp^2}(2))$. Let $\pi\colon X\rightarrow \pp^2$ be the projection and $D$ be the section with $D|_D\simeq \mathcal{O}_D(-2)$. Then, the divisor $L$ is an ample Cartier divisor and $L|_{D}\sim \ell$ if and only if we can write 
\[
L \sim \alpha D  + (2\alpha +1)\pi^*\ell 
\]
where $\alpha \geq 1$. 

Now, assume that $X\simeq {\rm Bl}_C({\rm Bl}_p \pp^2)$. 
Let $E$ be the exceptional divisor of $X\rightarrow {\rm Bl}_p(\pp^2)$, let $D$ be the strict transform on $X$ of the exceptional divisor of ${\rm Bl}_p(\pp^2)\rightarrow \pp^2$, and let $\Gamma$ be the preimage of a general line with respect to the fibration $X\rightarrow \pp^2$. Therefore, assuming that $L$ is an ample Cartier divisor and $L|_D\sim \ell$, we can write 
\[
L\sim \alpha D + \beta \Gamma + (2\alpha - \beta +1)E, 
\]
where $\alpha \geq 1, \beta \geq 1$, and 
$2\alpha \geq \beta > \frac{4}{3}\alpha + \frac{2}{3}$.

Therefore, there are only two isomorphism classes for the pairs $(X,D)$. The possible linear equivalence classes for $L$ are parametrized by a sequence in the former case and by the integral points in a $2$-dimensional polyhedron in the latter. 
For any $L$ as above, the homomorphism 
\[
H^0(X,\mathcal{O}_X(K_X+9L))\rightarrow 
H^0(D,\mathcal{O}_D((K_X+9L)|_D))\simeq 
H^0(\pp^2,\mathcal{O}_{\pp^2}(8))
\]
is surjective.
Therefore, a general element of $H^0(X,\mathcal{O}_X(K_X+9L))$ cuts out a general smooth octic curve in $D\simeq \pp^2$.
}
\end{example}

\subsection{Questions}
Finally, we record some questions for further research. 
In Example~\ref{ex:xld}, we describe smooth toric Fano $3$-folds $X$ and ample Cartier divisors $L$ for which there exists a prime torus invariant divisor $D$ with $L+D$ not nef. 
In such an example, we show that $D$ must be isomorphic to $\pp^2$. In higher dimensions, it is possible to produce examples in which $D$ is not isomorphic to projective space.
This leads to the following question.

\begin{question}
What is the geometry of the torus invariant prime divisors $D$ for which $L+D$ is not nef in a polarized toric Fano variety $(X,L)$?
\end{question}

Fujita's conjecture is generally expected to hold for linear systems of the form $\left|K_X+L\right|$ whenever $L$ is a sufficiently positive line bundle.
In~\cite{Mur24}, Murayama considers a variation of Fujita's conjecture for linear systems of the form 
$\left|K_X+aK_X+bL\right|$, proving several results for surfaces. 
In the toric case, the results in~\cite{Fuj03,Payne06} show that  the linear system $\left|K_X +L_1+\dots+L_{n+1}\right|$ is basepoint-free for possibly distinct ample Cartier divisors $L_1,\dots,L_{n+1}$. 
%Note that Fujita's conjecture is a special case of the previous question when $X$ has Picard rank one. 
%Conj 5.2 in Koll\'ar's paper Singularities of pairs.
The effective basepoint-freeness of adjoint linear systems has also been studied through the concept of convex Fujita numbers in, for example, \cite{CKMS23, CKMS24b, CKMS24a}.
This leads to the following question.

\begin{question}
To what extent does Conjecture~\ref{conj:main} hold for systems of the form $|K_X+L_1+\dots+L_{3n+1}|$ where the $L_i$'s are ample Cartier divisors? 
\end{question}

\bibliographystyle{habbvr}
\bibliography{references}

\end{document}